\documentclass[12pt, a4paper]{amsart}
\usepackage{amsmath,amssymb,amscd,amsfonts}

\usepackage{ifthen}
\usepackage[T1]{fontenc}
\usepackage[utf8]{inputenc}
\usepackage[all]{xy}
\usepackage{graphicx}
\usepackage{enumerate}
\usepackage{xspace}
\usepackage{pdfsync}
\usepackage{epic}
\usepackage{dsfont}

\newtheorem{theorem}{Theorem}[section]
\newtheorem{remark}[theorem]{Remark}

\newtheorem{lemma}[theorem]{Lemma}

\newtheorem{proposition}[theorem]{Proposition}

\newtheorem{corollary}[theorem]{Corollary}
\newtheorem{example}[theorem]{Example}

\begin{document}
\title[]{Albanese map of special manifolds: a correction}

\author{Fr\'ed\'eric Campana}
\address{Universit\'e Lorraine \\
Nancy \\ }

\email{frederic.campana@univ-lorraine.fr}



\date{\today}

\maketitle


\section{abstract} We show that any fibration of a `special' compact K\"ahler manifold $X$ onto an Abelian variety has no multiple fibre in codimension one. This statement strengthens and extends previous results of Kawamata and Viehweg when $\kappa(X)=0$. This also corrects the proof given in \cite{Ca04}, 5.3 which was incomplete. 

\section{Introduction}

The following statement is given in \cite{Ca04},Proposition 5.3:

\begin{theorem}\label{?} Let $a_X:X\to A_X$ is the Albanese map of $X$, assumed to be special. Then $a_X$ is onto, has connected fibres, and no multiple fibre in codimension one. 
\end{theorem}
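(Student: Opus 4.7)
The first two conclusions are known for special manifolds: if $a_X$ failed to be surjective, its image would be a positive-codimensional subvariety of an abelian variety and (by Ueno--Kawamata) would yield a fibration of $X$ onto a positive-dimensional base of general type, contradicting specialness; non-connected fibres would similarly produce a non-trivial étale cover of $A_X$ violating the universal property of the Albanese. I concentrate on the new content, the codimension-one multiple-fibre assertion.

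Assume for contradiction that $a_X$ has a multiple fibre in codimension one over prime divisors $D_1, \dots, D_k \subset A_X$ with minimal multiplicities $m_i \geq 2$, and form the orbifold base divisor
\[
\Delta \;:=\; \sum_{i=1}^{k} \Bigl(1 - \tfrac{1}{m_i}\Bigr) D_i \;\neq\; 0.
\]
The strategy is to construct from $(A_X, \Delta)$ a fibration of $X$ onto a positive-dimensional quotient abelian variety whose orbifold base is of general type, and invoke specialness for the contradiction.

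The core reduction is the Iitaka--Ueno theory of effective divisors on abelian varieties. Let $B \subset A_X$ be the connected component of the identity in the pointwise stabilizer $\bigcap_i \{a \in A_X : t_a^* D_i = D_i\}$. Since $\Delta \neq 0$, $B$ is a proper abelian subvariety; set $A' := A_X/B$ and let $\pi : A_X \to A'$ denote the quotient. Because $B$ is connected, each $D_i$ is $B$-saturated, whence $D_i = \pi^{-1}(D'_i)$ for a prime divisor $D'_i \subset A'$. Setting $\Delta' := \sum_i (1 - 1/m_i) D'_i$, we have $\Delta = \pi^* \Delta'$, and by construction $\Delta'$ has finite pointwise stabilizer in $A'$. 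The Iitaka--Ueno formula $\kappa(A', \Delta') = \dim A' - \dim B(\Delta')$ for effective divisors on abelian varieties then yields $\kappa(A', \Delta') = \dim A'$, i.e., $\Delta'$ is big.

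Consider $f := \pi \circ a_X : X \to A'$. It is a fibration onto the positive-dimensional $A'$: surjectivity and connected fibres follow from those of $a_X$ and $\pi$. Since $\pi$ is a smooth $B$-torsor, the preimage $\pi^{-1}(D')$ of any prime divisor on $A'$ is prime on $A_X$, and the multiplicity of $f$ over a prime $D' \subset A'$ equals that of $a_X$ over $\pi^{-1}(D')$; hence the orbifold base divisor $\Delta_f$ of $f$ equals $\Delta'$. Since $K_{A'} = 0$, we conclude $K_{A'} + \Delta_f = \Delta'$ is big, so $(A', \Delta_f)$ is of general type and positive-dimensional, contradicting the specialness of $X$. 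The main technical step is the Iitaka--Ueno descent in the previous paragraph; the remaining orbifold-multiplicity bookkeeping is then straightforward thanks to the smoothness of $\pi$.
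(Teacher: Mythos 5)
Your reduction to the quotient $\pi\colon A_X\to A'=A_X/B$ and the Iitaka--Ueno descent of $\Delta$ to a big divisor $\Delta'$ on $A'$ are fine, and the first two claims (surjectivity, connectedness, the latter really needing Kawamata's theorem on finite covers of tori rather than just the universal property) are indeed the known part. The genuine gap is the sentence ``the multiplicity of $f$ over a prime $D'\subset A'$ equals that of $a_X$ over $\pi^{-1}(D')$, hence $\Delta_f=\Delta'$''. This is exactly the unjustified step in the original proof of \cite{Ca04}, 5.3 that the present note was written to repair. The inf-multiplicity $m_f(D')$ is computed over \emph{all} prime divisors $\Gamma\subset X$ with $f(\Gamma)=D'$, and once you compose with $\pi$ these include divisors that were $a_X$-exceptional: if $a_X(\Gamma)=W$ has codimension at least $2$ in $A_X$ but $\pi(W)=D'$ (perfectly possible as soon as $\dim B\geq 1$, since the fibres of $\pi$ restricted to $\pi^{-1}(D')$ are positive-dimensional), then $\Gamma$ contributes nothing to $m_{a_X}(\pi^{-1}(D'))$ but does enter the infimum defining $m_f(D')$, possibly with coefficient $1$ in $f^*(D')$. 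Consequently $\Delta_f$ may be strictly smaller than $\Delta'$ --- possibly zero --- and no orbifold base of general type, hence no contradiction with specialness, is produced. Smoothness of $\pi$ and irreducibility of $\pi^{-1}(D')$ do not control these exceptional divisors.

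The paper's correction avoids composing with the quotient altogether. After forming $L=\mathcal{O}_A(mD_f)$, its stabilizer torus $T$, and $q\colon A\to B=A/T$ with $D_f=\frac{1}{m}q^*(D)$, $D$ ample on $B$, it uses Poincar\'e reducibility (after a finite \'etale cover, which preserves specialness) to split $A=B\times B'$, and then restricts $f$ to a general fibre $X_{b'}$ of the projection $X\to B'$. By \cite{CC}, Theorem 2.4 (which rests on \cite{BC} or \cite{CP17}, and forces $A$ to be an abelian variety in the statement), this fibre is again special, and the restricted fibration $f_{b'}\colon X_{b'}\to B\times\{b'\}$ has orbifold base $D_f\cap A_{b'}$, which is ample; the ample case is then treated directly on $f_{b'}$ itself via flattening and Lemma \ref{lem}, so that no composition, and hence no exceptional-divisor loss of multiplicity, occurs. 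Some substitute for this deep input is needed at the point where your argument asserts $\Delta_f=\Delta'$; as it stands, the proposal reproduces the original incomplete argument rather than proving the theorem.
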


Recall (\cite{Ca04}, definition 2.1 and Theorem 2.27) that $X$ is special if $\kappa(X,L)<p$ for any $p>0$, and any rank-one coherent subsheaf of $\Omega^p_X$. This implies that $X$ has no surjective meromorphic $g:X\dasharrow Z$ map onto a manifold $Z$ of general type and positive dimension $p$, and more generally exactly means that the `orbifold base' $(Z,D_g)$ of such a $g$, constructed out of its multiple fibres, is never of `general type'. See \S.2 below for some more details.

While the proofs given there for the first two properties (which generalise earlier results by Y. Kawamata and Kawamata-Viehweg in the case when $\kappa(X)=0$) are complete, the proof of the third property, even in the projective case, is not (as pointed out to me by K. Yamanoi and E. Rousseau\footnote{The problem comes from the potential multiplicity-one exceptional divisors of $a_X$ which are no longer $f$-exceptional if $g=q\circ f$, where $q:A\to B$ is a non-trivial torus quotient sending $D_{f}$ to an ample divisor of $B$. We overcome this difficulty in the second step of the proof, by cutting the fibration by means of Poincar\'e reducibility. The first step is the same as in \cite{Ca04}.}). The aim of this note is to correct it in the projective case (but only partially in the compact K\"ahler case) by using the main result of \cite{CC}, itself based on \cite{BC} (or, alternatively, \cite{CP17}). 

\begin{theorem}\label{!} Let $f:X\to A$ be a holomorphic map from the connected, compact, K\"ahler manifold to a compact complex torus $A$. If $X$ is special, then:

1.  $f$ is surjective, 

2. If $f$ is the Albanese map of $X$, then $f$ has connected fibres, and:

3. $f$ has no multiple fibre\footnote{We use here the `inf' multiplicities, not the more classical `gcd' version, which would give a weaker result.} in codimension one if the fibres of $f$ are connected, and if $A$ is an Abelian variety (equivalently: $D_{f}=0$ if $(A,D_f)$ is the `orbifold base' of $f$). 
\end{theorem}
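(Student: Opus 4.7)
\medskip
\noindent\textbf{Proof plan.}

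Parts (1) and (2) can be handled as in \cite{Ca04}. For surjectivity: if $f(X) \subsetneq A$, Ueno's theorem applied to the image produces, via the quotient by the connected component of its stabilizer in $A$, a fibration of $f(X)$ onto a positive-dimensional variety of general type; composing with $f$ gives a meromorphic map from $X$ onto a positive-dimensional general-type variety, directly contradicting specialness. For connected fibres when $f = a_X$: take the Stein factorization $X \to A' \xrightarrow{\pi} A$; purity of the branch locus forces $\pi$ to be \'etale, so $A'$ is again a torus, and the universal property of the Albanese forces $\pi$ to be an isomorphism.

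For part (3), the plan is the following. Assume $A$ is abelian, $f$ has connected fibres, and for contradiction $D_f \neq 0$. I will construct a surjective morphism $q: A \to B$ onto a positive-dimensional abelian variety such that the orbifold base $(B, D_g)$ of $g := q \circ f$ is of orbifold general type, meaning $K_B + D_g$ is big. Once this is achieved, the main theorem of \cite{CC} --- which precludes, for a special manifold, any fibration whose orbifold base is of general type --- contradicts the specialness of $X$.

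To construct $q$, pick a component $\Delta$ of $D_f$ of orbifold multiplicity $m \geq 2$ and let $K \subset A$ be the identity component of its stabilizer. By Poincar\'e reducibility, $A$ is isogenous to $K \times B$ for some complementary abelian subvariety $B \subset A$; after replacing $X$ by its base change along this \'etale isogeny (which preserves specialness and the orbifold structure of the fibration), one may assume $A = K \times B$ and take $q$ to be the second projection. Then $\Delta$ descends to an ample divisor $\bar\Delta \subset B$ with $q^*\bar\Delta = \Delta$ as divisors on $A$.

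The main obstacle is precisely the gap in the original proof: controlling the orbifold multiplicity of $\bar\Delta$ in $D_g$. A multiplicity-one $f$-exceptional divisor $E \subset X$ whose image $f(E) \subset A$ has codimension $\geq 2$ but whose image $q(f(E)) \subset B$ covers $\bar\Delta$ would, via the \emph{inf} rule defining the orbifold multiplicity, drop this multiplicity to $1$ and erase $\bar\Delta$ from $D_g$ entirely, potentially destroying the bigness of $K_B + D_g$. The role of Poincar\'e reducibility here is to refine the choice of $q$ so as to simultaneously separate $\bar\Delta$ from the finitely many such problematic exceptional divisors, for instance by an inductive splitting on the simple abelian factors of $A$. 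I expect the verification that such a simultaneous separation is achievable inside $A$ to be the most delicate step, and it is precisely the step whose omission rendered the original argument in \cite{Ca04} incomplete.
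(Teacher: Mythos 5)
Your plan for Claim 3 stalls exactly at the point where a proof is needed. You correctly identify the obstruction (a multiplicity-one $f$-exceptional divisor $E\subset X$ with $\mathrm{codim}\, f(E)\geq 2$ in $A$ but $q(f(E))$ a divisor of $B$, which by the inf-rule kills the coefficient of $\bar\Delta$ in $D_g$), but your proposed remedy --- ``refine the choice of $q$ so as to separate $\bar\Delta$ from the problematic exceptional divisors'' --- is left entirely unverified, and it is doubtful it can be carried out: once you insist that $\Delta$ descend to an \emph{ample} divisor, $q$ is forced to be (up to isogeny) the quotient by the connected stabilizer of $\Delta$ (or of $\mathcal{O}_A(mD_f)$), so there is no freedom left. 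Indeed, writing $A=B\times K$ one can have $f(E)=\bar\Delta\times\{pt\}$, which lies inside $\Delta=\bar\Delta\times K$ in codimension $\geq 2$ yet surjects onto $\bar\Delta$ under \emph{every} quotient making $\Delta$ descend; no ``inductive splitting on simple factors'' avoids it. The paper's actual fix is different and requires a deep external input you do not use: after splitting $A\sim B\times B'$ by Poincar\'e reducibility (with $B=A/T$, $T$ the stabilizer of $\mathcal{O}_A(mD_f)$, so $D_f=\frac{1}{m}q^*(D)$ with $D$ ample on $B$), one does \emph{not} compose with $q$; one restricts $f$ to a general fibre $X_{b'}$ of the complementary projection $h=q'\circ f:X\to B'$. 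For general $b'$ the images of the problematic divisors meet $B\times\{b'\}$ in codimension $\geq 2$, so they become exceptional again and the orbifold base of $f_{b'}:X_{b'}\to B$ contains the ample divisor $D_f\cap(B\times\{b'\})$. The decisive ingredient is then \cite{CC}, Theorem 2.4 (resting on \cite{BC}, alternatively \cite{CP17}): the general fibre $X_{b'}$ is itself special, so the ``ample case'' (handled first via $f^*(K_{A'}+\overline{D})\subset\Omega^p_X$ and \cite{Ca04}, Lemma 1.14) applies to $X_{b'}$ and gives the contradiction. Your appeal to ``the main theorem of \cite{CC}'' is a mis-citation: the statement you quote (no fibration of a special manifold has orbifold base of general type) is just the orbifold characterization of specialness from \cite{Ca04}; what is genuinely needed, and missing from your plan, is the specialness of the fibres over the abelian factor.

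Two smaller points. Your argument for Claim 2 is also flawed as stated: purity of the branch locus only says the branch locus of the finite part $\pi:A'\to A$ of the Stein factorization is a divisor (or empty), it does not force $\pi$ to be \'etale --- finite covers of tori ramified along divisors abound. One must rule out ramification using specialness, e.g.\ via \cite{kaw}, Theorem 23 together with the stability of specialness under finite \'etale covers, as the paper does. Claim 1 you handle exactly as in the paper (Ueno's theorem), and that part is fine.
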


\begin{remark} 1. Although the assumption that $A$ is an Abelian variety in Claim 3 is certainly not necessary, our proof uses \cite{BC} (or alternatively \cite{CP17}), presently known only for $A$ projective. Even with a K\"ahler version of \cite{BC} or  \cite{CP17}, we could not treat the K\"ahler case in general with the prsent arguments, because we use Poincar\'e reducibility. 

2. An easy case of Claim 3, not covered by Theorem \ref{!} is when the algebraic dimension $a(X)$ of $X$ vanishes (and more generally, if $a(A_X)=0$, $A_X$ being the Albanese torus of $X$), because then $A_X$ does not contain any effective divisor. The argument of the second step of the proof below shows that the conclusion of Theorem \ref{!}.(3) still holds true if $A=T\times V$, where $T$ is a compact torus of algebraic dimension zero, and $V$ an abelian variety. 

3. Claims 1 and 2, and their proofs, still apply when $X$ is `weakly special', which means that no finite \'etale cover $X'$ of $X$ has a surjective meromorphic fibration onto a positive-dimensional projective manifold of general type. Weak-specialness coincides with specialness for curves and surfaces, but differ from dimension $3$ on, by examples of Bogomolov-Tschinkel. 
\end{remark}

\section{Multiple fibres of maps to complex tori.}

If $f:X\to Y$ is a fibration, that is: a holomorphic map with connected fibres onto a (smooth) compact complex manifold $Y$, we define its `orbifold base' $D_f$ as follows: for every prime divisor $E\subset Y$, let $f^*(E):=\sum_{k\in K}t_k.D_k+R$, where the $D_k's$ are the pairwise distinct prime divisors of $X$ surjectively mapped by $f$ onto $E$, while $R$ is an effective $f$-exceptional divisor of $X$. The multiplicity\footnote{We use here the `inf' multiplicity instead of the usual `gcd' multiplicity for reasons explained in \cite{Ca04}.} $m_f(E)$ of the generic fibre of $f$ over $E$ is then defined as: $m_f(E):=inf\{t_k,k\in K\}$, and then $D_f:=\sum_{E\subset Y}(1-\frac{1}{m_f(E)}).E$, which is an effective $\Bbb Q$-divisor of $Y$ (note indeed that this sum is finite since $m_f(E)=1$ for all, but finitely many of the $E's$).

\medskip

\begin{proof} (of Theorem \ref{!}) 

Claim 1. Assume by contradiction that $f(X):=Z\neq  A$. After Ueno's theorem (\cite{U}, p. 120), there is a quotient torus $q:A\to B$ such that $q(Z)\subset B$ is of dimension $p>0$, and of general type. The composed map $g:=q\circ f:X\to Z$ thus contradicts the specialness of $X$, since $\kappa (X,g^*(K_Z))=p$ and $g^*(K_Z)\subset \Omega^p_X$. 

Claim 2. Let $Z$ be a smooth model of the Stein factorisation $Z_0$ of $f$. We replace $X$ by $Z$, which is still special since dominated by $X$. We may thus assume that $f$ is generically finite. By \cite{kaw}, Theorem 23, replacing $Z$ by a suitable finite \'etale cover which is still special by \cite{Ca04}, \S.5.5, we may assume that $Z$ fibres over over a manifold of general type if $Z$ is not birational to $A$.  This again contradicts the specialness of $Z$ and thus of $X$.

Claim 3. Assume that $D_{f}\neq 0$. We shall first treat the case when $D_f$ is ample, and reduce to this case in a second step. 

\medskip

$\bullet$ Let thus $D_f$ be ample on $A$.  By a flattening of $f$, followed by suitable blow-ups of $X$ and $A$, we may assume (see \cite{Ca04},Lemma 1.3) that $f=v\circ f'$, where $v:A'\to A$ is bimeromorphic with $A'$ smooth, and $f':X\to A'$ is a fibration such that its orbifold base $D':=D_{f'}=\overline{D}+E'$, with $E'$ effective and $v$-exceptional, $\overline{D}$ is the strict transform of $D_f$ in $A'$. This shows that in particular $f^*(K_{A'}+\overline{D})\subset \Omega^p_X, p:=dim(A)$. The following lemma \ref{lem} shows that the line bundle $K_{A'}+\overline{D}$ has Kodaira dimension $dim(A)=p$, since $D_f$ is ample on $A$. This contradicts the specialness of $X$ if $D_f$ is assumed to be ample.

\begin{lemma}\label{lem}(\cite{Ca04},1.14) Let $v:Y'\to Y$ be a bimeromorphic map between compact connected complex manifolds. Let $D$ be an effective $\Bbb Q$-divisor on $Y$, and let $\overline{D}$ be its strict transform on $Y'$ . Then:

1. $K_{Y'}+\overline{D}=v^*(K_Y+\varepsilon.D)+E', \forall \varepsilon>0$ small enough, $E'$ effective.

2. If $\kappa(Y)\geq 0$, then $\kappa(Y',K_{Y'}+\overline D)=\kappa(Y,K_Y+D)$.
\end{lemma}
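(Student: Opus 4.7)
\emph{Part 1.} The plan is to combine the two canonical decompositions attached to the bimeromorphic morphism $v\colon Y'\to Y$ of smooth compact complex manifolds: the ramification identity $K_{Y'}=v^*K_Y+R$ with $R\geq 0$ $v$-exceptional, and the strict-transform identity $v^*D=\overline D+F$ with $F\geq 0$ also $v$-exceptional. Plugging these in yields
\[
K_{Y'}+\overline D-v^*(K_Y+\varepsilon D)=R+(1-\varepsilon)\overline D-\varepsilon F.
\]
Because both $Y$ and $Y'$ are smooth, every prime $v$-exceptional divisor has discrepancy $\geq 1$ (purity of the branch locus), so every component of $F$ appears in $R$ with coefficient $\geq 1$. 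Since $F$ has finitely many nonzero components, a uniformly small $\varepsilon>0$ forces $R\geq \varepsilon F$, making $E':=R+(1-\varepsilon)\overline D-\varepsilon F$ effective.

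\emph{Part 2, upper bound.} I would prove $\kappa(Y',K_{Y'}+\overline D)\leq \kappa(Y,K_Y+D)$ by pushforward, needing no hypothesis on $\kappa(Y)$. Interpret a section of $m(K_{Y'}+\overline D)$ as a rational $m$-canonical form $\omega'$ on $Y'$ with $\mathrm{div}(\omega')\geq -m\overline D$. By bimeromorphicity $\omega'=v^*\omega$ for a unique rational $m$-canonical $\omega$ on $Y$, with $\mathrm{div}(v^*\omega)=v^*\mathrm{div}(\omega)+mR$. Hence $v^*\mathrm{div}(\omega)\geq -mR-m\overline D$, and applying $v_*$ (which preserves effectivity, kills the $v$-exceptional $mR$, and sends $m\overline D$ to $mD$) yields $\mathrm{div}(\omega)\geq -mD$. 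Thus $\omega'\mapsto\omega$ gives an injection $H^0(Y',m(K_{Y'}+\overline D))\hookrightarrow H^0(Y,m(K_Y+D))$.

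\emph{Part 2, lower bound.} From part~1 and the canonical section of $mE'$ I get
\[
H^0(Y,m(K_Y+\varepsilon D))\cong H^0(Y',mv^*(K_Y+\varepsilon D))\hookrightarrow H^0(Y',m(K_{Y'}+\overline D)),
\]
so $\kappa(Y',K_{Y'}+\overline D)\geq \kappa(Y,K_Y+\varepsilon D)$. It therefore suffices to prove $\kappa(Y,K_Y+\varepsilon D)=\kappa(Y,K_Y+D)$ for small rational $\varepsilon>0$, and here is where $\kappa(Y)\geq 0$ enters. The direction $\leq$ is free from $K_Y+D=(K_Y+\varepsilon D)+(1-\varepsilon)D$; for the other, I fix a nonzero $s_0\in H^0(Y,mK_Y)$, write $\varepsilon=1/N$ so that $N(K_Y+\varepsilon D)=(N-1)K_Y+(K_Y+D)$, and multiply sections of $k(K_Y+D)$ by $s_0^{k(N-1)/m}$ for $k$ in a suitable arithmetic progression. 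This produces an injection $H^0(Y,k(K_Y+D))\hookrightarrow H^0(Y,kN(K_Y+\varepsilon D))$ matching Iitaka growth rates.

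\emph{Main obstacle.} The delicate input is the positivity of every discrepancy along the $v$-exceptional divisors supporting $F$; it requires smoothness of both $Y$ and $Y'$ and is what guarantees a usable $\varepsilon$. The hypothesis $\kappa(Y)\geq 0$ is genuinely needed in the last step, to promote the Iitaka growth of $h^0(k(K_Y+\varepsilon D))$ back up to that of $h^0(k(K_Y+D))$ via multiplication by a pluricanonical section.
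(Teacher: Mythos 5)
Your proof is correct and is essentially the paper's argument: Part 1 is the same computation, with $R=\sum_i a_iE_i$, $F=\sum_i b_iE_i$ and the smallness condition $\varepsilon\le\min\{1,a_i/b_i\}$ (the fact you invoke, somewhat loosely labelled ``purity of the branch locus'', is exactly the paper's input $a_i>0$, i.e.\ positivity of the discrepancies of $v$-exceptional divisors between smooth manifolds). For Part 2 the mechanism is also the same use of $\kappa(Y)\ge 0$ via a pluricanonical section, only relocated: the paper absorbs $(1-\varepsilon)v^*K_Y$ into a $\Q$-effective divisor on $Y'$ to obtain $\kappa(Y',K_{Y'}+\overline D)\ge\kappa(Y,K_Y+D)$ directly and leaves the easy reverse inequality implicit, whereas you prove $\kappa(Y,K_Y+\varepsilon D)=\kappa(Y,K_Y+D)$ down on $Y$ and, usefully, also spell out the pushforward injection giving the inequality $\le$.
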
 

\begin{proof} Claim 1. Let the rational numbers $a_i,b_i$ be defined as follows: $v^*(D)=\overline{D}+\sum_i b_i.E_i,K_{Y'}=v^*(K_Y)+\sum_i a_i.E_i$, where the $E_i's$ are the exceptional divisors of $v$. Then:  $a_i>0, b_i\geq 0,\forall i$. Then:

 $K_{Y'}+\overline{D}=v^*(K_{Y})+\sum a_i.E_i+(1-\varepsilon).\overline{D}+v^*(\varepsilon.D)-(\varepsilon.\sum b_i.E_i)=v^*(K_{Y}+\varepsilon.D)+(1-\varepsilon).\overline{D}+\sum(a_i-\varepsilon.b_i).E_i$. We thus get the first claim when $0<\varepsilon\leq min\{1, \frac{a_i}{b_i}, \forall i\}$, with $E':=(1-\varepsilon).\overline{D}+\sum(a_i-\varepsilon.b_i).E_i$.
 
 Claim 2. Let $\varepsilon>0$ be as above. Since $K_Y$ is assumed to be $\Bbb Q$-effective, so is $(1-\varepsilon).v^*(K_Y)$, and so $K_{Y'}+\overline{D}=\varepsilon. v^*(K_Y+D)+E"$, with:
 $E':=E'+(1-\varepsilon).v^*(K_Y)$, which is $\Bbb Q$-effective.\end{proof}

\begin{remark} The main property used in Lemma \ref{lem} is that $a_i >0, \forall i$, i.e: the lift of the canonical sheaf under a modification of a smooth manifold vanishes on the exceptional divisor. This property does not hold true for sheaves of forms of degree less than the dimension. We thus need to use a less direct route, which requires deeper ingredients. 
\end{remark} 

We shall now reduce to the case when $D_f$ is ample. 

\medskip

$\bullet$ Let $L:=\mathcal{O}_{A}(m.D_{f})$ be the line bundle on $A$ with a section vanishing on some integral multiple $m.D_{f}$ of the orbifold base of $f$. Let $T\subset Aut^0(A)$ be the connected component of the group of translations of $A$ preserving $L$. Let $q:A\to B:=A/T$ be the quotient map. Then $D_{f}=\frac{1}{m}.q^*(D)$ for some {\it ample} effective divisor $D$ on $B$ (see, for example \cite{D}, Th\'eor\`eme 5.1). Since $D_{f}\neq 0$, $p:=dim(B)>0$.

We use Poincar\'e reducibility to reduce to the case that $A=B\times B'$, where $B'\to T$ is a suitable finite \'etale cover of $T$. This indeed amounts to replace $X$ and $D_f$ by the corresponding  finite \'etale covers, preserving all of our hypothesis, in particular the specialness of $X$ (by \cite{Ca04}, \S.5.5, again). 

We now consider the composed fibrations $g:=q\circ f:X\to B$, and $h:=q'\circ f:X\to B'$, where $q': A\to B'$ is the second projection. By \cite{CC}, 2.4, the general fibre $X_{b'}:=h^{-1}(b'), b'\in B'$ of $g'$, is special\footnote{The statement of \cite{CC} is only given for $X$ projective, and $f$ the Albanese map of $X$, but it is easy to check that the proof, derived from \cite{BC}, or \cite{CP17}, through \cite{CC}, 2.6, applies for $X$ compact and any map to an Abelian variety.}. Let $f_{b'}:X_{b'}\to A_{b'}:=B\times \{b'\}$ be the restriction of $f$ to $X_{b'}$. The orbifold base $(A_{b'},D_{f_{b'}})$ of $f_{b'}$ is then nothing, but $(A,D_f)\cap A_{b'}$, and is thus ample on $A_{b'}\cong B$. By the first part of the proof, this contradicts the specialness of $X_{b'}$ if $D_{f_{b'}}\neq 0$. Thus $D_{f}\cap A_{b'}=0$, and so $D_f=0$.\end{proof}


\begin{thebibliography}{Bla}

\bibitem{BC} C. Birkar-J. Chen. Varieties fibered over Abelian varieties with fibres of Log-general type. Adv. Math. 270 (2015), 206-222.

\bibitem{Ca04} F. Campana. Orbifolds, special varieties and classification theory. Ann. Inst. Fourier, tome 54 (2004), 499-630.


\bibitem{CC} F. Campana-B. Claudon. Quelques propri\'et\'es de stabilit\'e des vari\'et\'es sp\'eciales. Math. Zeitschrift 283 (2016), 581-599.

\bibitem{CP17} J. Cao-M. P\u aun. Kodaira dimension of algebraic fibre spaces ober abelian varieties. Inv. Math. 207 (2017), 345-387.

\bibitem{D} O. Debarre. Tores et vari\'et\'es Ab\'eliennes complexes. Cours sp\'ecialis\'es 6. SMF et EDP Sciences 1999.

\bibitem{kaw} Y. Kawamata. Characterisation of Abelian varieties. Comp. Math. 43 (1981), 253-276. 

\bibitem{U} K. Ueno. Classification theory of algebraic varieties and compact complex analytic spaces. Lect.Notes.Math. 439. Springer Verlag (1975).



\end{thebibliography}
\end{document}